\newtheorem{thm}{Theorem}[section]
\newtheorem{lem}[thm]{Lemma}
\newtheorem{prop}[thm]{Proposition}
\newtheorem{defn}[thm]{Definition}
\newtheorem{rem}[thm]{Remark}
\newtheorem{conj}[thm]{Conjecture}
\newcommand{\C}{\mathbb{C}} 
\newcommand{\N}{\mathbb{N}}
\newcommand{\Z}{\mathbb{Z}}
\newcommand{\F}{\mathbb{F}}
\begin{document}

\title{Maximality of subfields as cliques in Cayley graphs over finite fields}
\author{Chi Hoi Yip}
\address{Department of Mathematics \\ University of British Columbia \\ 1984 Mathematics Road \\ Vancouver  V6T 1Z2 \\ Canada}
\email{kyleyip@math.ubc.ca}
\subjclass[2020]{05C25, 05C69, 11T24}
\keywords{Cayley graph, maximal clique, character sum}

\maketitle

\begin{abstract}
We show the maximality of subfields as cliques in a special family of Cayley graphs defined on the additive group of a finite field. In particular, this confirms a conjecture of Yip on generalized Paley graphs.
\end{abstract}

\section{Introduction}

Throughout the paper, let $p$ be an odd prime and $q$ a power of $p$. Let $\F_q$ be the finite field with $q$ elements, $\F_q^+$ be its additive group, and $\F_q^*=\F_q \setminus \{0\}$ be its multiplicative group. 

In this paper we study maximal cliques in Cayley graphs. We begin by recalling some basic terminologies. Given an abelian group $G$ and a connection set $S \subset G \setminus \{0\}$ with $S=-S$, the {\em Cayley graph} $\operatorname{Cay}(G,S)$ is
the undirected graph whose vertices are elements of $G$, such that two vertices $g$ and $h$ are adjacent if and only if $g-h \in S$. A {\em clique} in a graph $X$ is a subset of vertices in $X$ such that every two distinct vertices in the clique are adjacent. The clique number of $X$, denoted by $\omega(X)$, is the size of a maximum clique in $X$. A {\em maximal clique} is a clique that is not contained in a strictly larger clique. 

Generalized Paley graphs are well-studied Cayley graphs. They were first introduced by Cohen \cite{SC} in 1988, and have been reintroduced by several groups of authors. Let $d>1$ be a positive integer and $q \equiv 1 \pmod {2d}$. The {\em $d$-Paley graph} on $\F_q$, denoted $GP(q,d)$, is the Cayley graph $\operatorname{Cay}(\F_q^+;(\F_q^*)^d)$, where $(\F_q^*)^d$ is the set of $d$-th powers in $\F_q^*$. Note that the condition $q \equiv 1 \pmod {2d}$ avoids degeneracy of the graph; see for example \cite[Section 4]{SC}. Note that Paley graphs are simply $2$-Paley graphs. $3$-Paley graphs and $4$-Paley graphs are also known as \emph{cubic Paley graphs} and \emph{quadruple Paley graphs}. 

It is known that in the Paley graph $GP(q^2,2)$, the subfield $\F_q$ forms a maximal clique for a trivial reason \cite{BDR}: the clique number of $GP(q^2,2)$ is $q$. In general, Broere,  Döman, and Ridley \cite{BDR} observed that in the generalized Paley graph $GP(q^n,d)$, the subfield $\F_q$ forms a clique if $d \mid \frac{q^n-1}{q-1}$. In this case, this observation leads to  $\omega(GP(q^n,d)) \geq q$, which is much better than the generic best-known lower bound $O(\log q)$ on the clique number that holds for all generalized Paley graphs due to Cohen \cite{SC}. In fact, 
Green \cite{Green} showed that the clique number of almost all Cayley graphs defined on a cyclic group $G$ is $O(\log |G|)$, as $|G| \to \infty$. While in the case of generalized Paley graphs, the underlying group may not be cyclic, Green's result still suggests that a clique in a Cayley graph with exceptional size (that is, much larger than $O(\log q)$) tends to have special algebraic structures. 

Determining the clique number of (generalized) Paley graphs is widely open in general \cite{CL}; we refer to \cite[Section 1.3] {YipG} for a survey on recent (minor) improvements on the square root trivial upper bound. Thus, it is interesting if one can show that the above subfield constructions of cliques are not maximal so that the lower bound on the clique number can be further improved. However, the rigid algebraic structure of subfields suggests that it is very unlikely. Indeed, in \cite[Conjecture 1.4]{Y22}, Yip conjectured that such constructions give rise to maximal cliques:

\begin{conj}[{\cite[Conjecture 1.4]{Y22}}]\label{maxconj}
Let $d>1$ be an integer. Let $q \equiv 1 \pmod {2d}$ be a power of a prime $p$, and let $r$ be the largest integer such that $d \mid \frac{q-1}{p^r-1}$. Then the subfield $\F_{p^r}$ forms a maximal clique in $GP(q,d)$.
\end{conj}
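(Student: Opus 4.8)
\section{Proof strategy}

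The plan is to convert the maximality assertion into the non-existence of an extremal character sum. First recall why $\F_{p^r}$ is a clique: the hypothesis $d\mid\frac{q-1}{p^r-1}$ is equivalent to $d(p^r-1)\mid q-1$, which forces $a^{(q-1)/d}=1$ for every $a\in\F_{p^r}^*$, i.e. $\F_{p^r}^*\subseteq(\F_q^*)^d=:H$; since differences of elements of $\F_{p^r}$ again lie in $\F_{p^r}$, the subfield is a clique. A vertex $c\notin\F_{p^r}$ extends this clique precisely when $c-x\in H$ for every $x\in\F_{p^r}$, that is (using $-\F_{p^r}=\F_{p^r}$) when the whole coset $c+\F_{p^r}$ is contained in $H$. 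Thus maximality is equivalent to: \emph{no} coset $c+\F_{p^r}$ with $c\notin\F_{p^r}$ lies in $H$. Fixing a multiplicative character $\chi$ of $\F_q$ of order exactly $d$, membership in $H$ is detected by $\chi$, and since $c\notin\F_{p^r}$ guarantees $c+x\ne0$, the coset lies in $H$ if and only if $\chi(c+x)=1$ for all $x$, i.e. if and only if $\sum_{x\in\F_{p^r}}\chi(c+x)=p^r$. So I must rule out this maximal value of the sum.

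Before estimating, I would reduce to the case where $c$ generates $\F_q$ over $\F_{p^r}$. If $\F_{p^r}(c)=\F_{p^s}\subsetneq\F_q$, let $d'$ be the order of $\chi|_{\F_{p^s}^*}$; writing $e=(q-1)/(p^s-1)$ and $g=\gcd(d,e)$, the identity $\gcd(d/g,e/g)=1$ shows that an element of $\F_{p^s}^*$ is a $d$-th power in $\F_q$ if and only if it is a $d'$-th power in $\F_{p^s}$, so the extension problem transfers verbatim to $GP(p^s,d')$. The same computation shows both that $\F_{p^r}$ is still a clique there and that $r$ remains the largest admissible index for $d'$ over $\F_{p^s}$ (a larger index would, by the divisibility transfer, contradict the original maximality of $r$). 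Hence the hypotheses are inherited and I may assume $\F_q=\F_{p^r}(c)$, with $k:=[\F_q:\F_{p^r}]\ge2$ and $Q:=p^r$.

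With $c$ generating I would attack $S=\sum_{x\in\F_{p^r}}\chi(c+x)$ in two complementary ways. (i) Expanding the indicator of the additive subgroup $\F_{p^r}\le\F_q^+$ over its annihilator of additive characters turns $S$ into a sum of Gauss sums: the trivial term contributes $\sum_{z}\chi(z)=0$, and each of the remaining $q/Q-1$ terms has modulus $\sqrt q$, giving $|S|\le\sqrt q\,(1-Q/q)<Q$ exactly when $k=2$. (ii) For $k\ge3$ I would instead use the relative norm: for $x\in\F_{p^r}$, $N_{\F_q/\F_{p^r}}(c+x)=\prod_{i=0}^{k-1}(c^{Q^i}+x)=:f(x)$ is a separable polynomial of degree $k$ over $\F_{p^r}$ (its roots $-c^{Q^i}$ are the distinct conjugates of $-c$), and $c+x\in H$ forces $f(x)$ to be a $d_1$-th power in $\F_{p^r}^*$ with $d_1=\gcd(d,Q-1)$. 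If $d_1>1$, then for a character $\eta$ of $\F_{p^r}^*$ of order $d_1$ one has $\sum_{x\in\F_{p^r}}\eta(f(x))=Q$, while Weil's bound gives $\bigl|\sum_{x\in\F_{p^r}}\eta(f(x))\bigr|\le(k-1)\sqrt Q$, a contradiction whenever $k-1<\sqrt Q$. Finally there is a trivial regime: if $d$ is so large that $|H|=(q-1)/d<Q$, then $H$ cannot contain any coset of size $Q$.

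The hard part is the remaining middle regime, where $k-1\ge\sqrt Q$ (so $\F_{p^r}$ is small, e.g. $\F_3\subset\F_{81}$ with $d=4$, where $|H|=20\ge 3$ yet $(k-1)\sqrt Q\approx 5.2>3$), while $\gcd(d,Q-1)$ may equal $1$. There both pointwise estimates are vacuous because the sum has only $Q$ terms, fewer than the Weil error, and indeed the expected number of bad $c$ is of order $1$, so no density argument can succeed. I expect the proof to hinge on the finer arithmetic forced by the maximality of $r$: writing $\frac{q-1}{p^r-1}=\prod_{e\mid k,\,e>1}\Phi_e(Q)$ cyclotomically, maximality says $d'\nmid\frac{q-1}{p^{rt}-1}$ for every $t\mid k$ with $t>1$, which constrains how $d'$ meets the cyclotomic factors and yields the inheritance $\gcd(d',Q-1)\mid\gcd(k,Q-1)$ (from $\frac{Q^k-1}{Q-1}\equiv k\pmod{Q-1}$). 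I would combine this with the product-over-subfield relations $c^{p^{j}}-c=\prod_{x\in\F_{p^{j}}}(c+x)\in H$, valid for every $j\mid r$, which replace the large coset by lower-degree data, in order to set up a descent over the subfield lattice that drives every configuration into one of the three tractable regimes above. Making this descent close \emph{uniformly} in $k$ and $Q$, rather than only for $k=2$ or for large $Q$, is the crux of the argument.
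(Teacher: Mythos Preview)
Your overall strategy---reduce to a generator $c$, then bound the character sum $\sum_{x\in\F_{p^r}}\chi(c+x)$---is exactly the paper's. Your reduction to $\F_{p^r}(c)=\F_q$ via passage to $GP(p^s,d')$ is equivalent to the paper's move of restricting $\chi$ to the intermediate field $\F_{p^r}(\theta)$ and observing that a trivial restriction would produce a strictly larger subfield clique. The one substantive difference is the estimate itself. The paper invokes Katz's theorem: for any non-trivial multiplicative character $\chi$ of $\F_{Q^k}$ and any $c$ with $\F_Q(c)=\F_{Q^k}$, one has $\bigl|\sum_{x\in\F_Q}\chi(c+x)\bigr|\le(k-1)\sqrt Q$. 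Since your $\chi$ of order $d>1$ is already non-trivial on the big field, this bound applies directly and yields $Q\le(k-1)^2$ in one stroke, with no side condition on $d_1=\gcd(d,Q-1)$. Your approach (i) uses this same $\chi$ but bounds via the Gauss-sum expansion, giving $(1-Q^{1-k})Q^{k/2}$, which is far weaker than Katz for $k\ge3$; your approach (ii) passes through the norm to a character of order $d_1$, and hence needs $d_1>1$---an unnecessary restriction that genuinely fails in examples (e.g.\ $p=3$, $q=3^5$, $d=11$, where $r=1$ and $d_1=\gcd(11,2)=1$). Replacing (i)$+$(ii) by a single appeal to Katz recovers precisely the paper's argument.

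As for the ``hard middle regime'' $k-1\ge\sqrt Q$: this is not an artifact of your weaker bounds, and the paper does not close it either. The paper's main theorem carries the hypothesis $Q>(k-1)^2$ (in its notation, $q>(n-1)^2$), explicitly conjectures that this can be dropped, and only supplements with computer verification for small $n$. So once streamlined via Katz, your proposal matches the paper's actual achievement; the descent over the subfield lattice you sketch for the residual regime is an attempt to go beyond what the paper proves, and as you yourself note, it is not a complete argument.
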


In other words, the conjecture states that if $\F_{p^r}$ is the maximum subfield of $\F_q$ that forms a clique in $GP(q,d)$, then in fact it forms a maximal clique. The motivation of Conjecture~\ref{maxconj} is explained in \cite{Y22} in greater details; in particular, Yip \cite[Section 3]{Y22} showed that if $\F_{p^r}$ is not maximal, then there is a clique that is a $2$-dimensional space over $\F_{p^r}$ and consequently the lower bound on the clique number can be improved significantly to $\omega(GP(q,d)) \geq p^{2r}$. This observation, together with known upper bounds on the clique number, allows Yip \cite[Theorem 1.5 and Theorem 1.6]{Y22} to confirm Conjecture~\ref{maxconj} for cubic Paley graphs with cubic order and quadruple Paley graphs with quartic order. However, a similar argument fails to work in general since the best-known upper bound on the clique number is $O(\sqrt{q})$.

In this paper, we use different ideas to resolve Conjecture~\ref{maxconj}. For simplicity, we call a clique $C$ in a Cayley graph $X=\operatorname{Cay}(\F_q^+,S)$ to be a \emph{subfield clique} if $C$ is a subfield of $\F_{q}$, and we say $C$ is a \emph{maximal subfield clique} if $C$ is not contained in a strictly larger subfield clique. Our first main result confirms Conjecture~\ref{maxconj} in a stronger form: a maximal subfield clique in a generalized Paley graph is a maximal clique. 

\begin{thm}\label{main1}
Let $d>1$ be an integer. Let $q$ be a prime power such that $q^n \equiv 1 \pmod {2d}$ and $q>(n-1)^2$. If $\F_q$ is a maximal subfield clique in $GP(q^n,d)$, then $\F_q$ is also a maximal clique. 
\end{thm}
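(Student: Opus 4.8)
The plan is to reduce the claim to a single multiplicative character sum over the subfield, estimate that sum by a Weil-type bound, and reserve the maximality hypothesis for the one degenerate configuration the bound cannot exclude. Fix a multiplicative character $\chi$ of $\F_{q^n}$ of order exactly $d$; since $\F_q$ is a clique we have $\F_q^*\subseteq(\F_{q^n}^*)^d=\ker\chi$, so $\chi$ is trivial on $\F_q^*$. Because $q^n\equiv 1\pmod{2d}$ forces $-1\in(\F_{q^n}^*)^d$, the connection set is symmetric, and adjoining a vertex $v\in\F_{q^n}\setminus\F_q$ to $\F_q$ produces a clique if and only if $v-x\in(\F_{q^n}^*)^d$ for every $x\in\F_q$, i.e. if and only if $\chi(v-x)=1$ for all $x\in\F_q$ (note $v-x\neq 0$ as $v\notin\F_q$). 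Since each term has modulus one, this is equivalent to
\[
S:=\sum_{x\in\F_q}\chi(v-x)=q.
\]
It thus suffices to show that no $v\notin\F_q$ makes $S$ equal $q$, and I argue by contradiction, assuming such a $v$ exists.

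The core estimate I would establish is a subfield character sum bound: if $\psi$ is a nontrivial multiplicative character of $\F_{q^m}$ and $v\in\F_{q^m}$ has degree $m$ over $\F_q$ (equivalently, the conjugates $v,v^q,\dots,v^{q^{m-1}}$ are distinct), then
\[
\Bigl|\sum_{x\in\F_q}\psi(v-x)\Bigr|\le (m-1)\sqrt q.
\]
Granting this, set $\F_{q^m}=\F_q(v)$, so $m=[\F_q(v):\F_q]>1$ and $m\mid n$, and split according to $\chi|_{\F_{q^m}^*}$. If $\chi$ is \emph{nontrivial} on $\F_{q^m}^*$, applying the estimate with $\psi=\chi|_{\F_{q^m}}$ (legitimate since $v$ generates $\F_{q^m}$ over $\F_q$ and $x\in\F_q\subseteq\F_{q^m}$) gives $|S|\le(m-1)\sqrt q\le(n-1)\sqrt q$; as $q>(n-1)^2$ we have $(n-1)\sqrt q<q$, contradicting $S=q$. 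If instead $\chi$ is \emph{trivial} on $\F_{q^m}^*$, then $\F_{q^m}^*\subseteq\ker\chi=(\F_{q^n}^*)^d$, so every nonzero element of $\F_{q^m}$ is a $d$-th power and $\F_{q^m}$ is itself a clique in $GP(q^n,d)$; since $v\in\F_{q^m}\setminus\F_q$, this is a subfield clique \emph{strictly} containing $\F_q$, contradicting the maximality of $\F_q$ as a subfield clique. Either way we reach a contradiction, so $\F_q$ is a maximal clique. It is exactly this second case where maximality is indispensable: there the character sum genuinely equals $q$, and only the nonexistence of an intermediate subfield clique rules it out.

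The main obstacle is the core estimate, a Weil-type bound for a character of the large field $\F_{q^n}$ summed over the subfield $\F_q$. The natural route is to pass to the Kummer cover $y^{\operatorname{ord}\psi}=v-x$ and invoke Weil's bound, the factor $(m-1)$ coming from the $m$ distinct Galois conjugates $v,v^q,\dots,v^{q^{m-1}}$, which are the roots of the degree-$m$ minimal polynomial $\prod_{i=0}^{m-1}(X-v^{q^i})\in\F_q[X]$; its norm interpretation $N_{\F_{q^m}/\F_q}(v-x)=\prod_{i}(v^{q^i}-x)$ keeps the relevant polynomial inside $\F_q$. The delicate point, compared with a textbook application of Weil, is that the summation variable $x$ is constrained to the subfield $\F_q$ rather than to all of $\F_{q^m}$; I would handle this either by quoting an established subfield character sum estimate or by a Frobenius/Lefschetz analysis showing that restricting the $x$-coordinate to $\F_q$ costs only the stated factor, with the equality regime occurring precisely when $\psi$ is trivial on a proper subfield containing $v$—the configuration already dispatched by maximality.
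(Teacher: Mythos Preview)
Your proof is correct and follows essentially the same approach as the paper: both argue by contradiction, pass to the intermediate field $\F_{q^m}=\F_q(v)$ generated by the putative extra vertex, split on whether $\chi$ restricts trivially to $\F_{q^m}^*$, dispatch the trivial-restriction case via the maximal-subfield-clique hypothesis, and dispatch the nontrivial case via the subfield character sum bound $\bigl|\sum_{x\in\F_q}\chi(v-x)\bigr|\le(m-1)\sqrt q$. The paper simply quotes this last estimate as Katz's theorem rather than sketching a Weil-type derivation, but otherwise the arguments coincide.
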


In \cite[Theorem 1.7]{Y22}, Yip described a similar phenomenon in Peisert graphs and conjectured that $\F_q$ forms a maximal clique in a Peisert graph with order $q^4$ provided that $q>3$; this was confirmed by Asgarli and Yip in \cite[Theorem 1.5]{AY}. Moreover, in \cite[Section 5]{AY} of the same paper, they observed that a similar result holds for generalized Peisert graphs under extra assumptions. 

Our second main result improves and extends the results in \cite[Section 5]{AY} substantially. Before stating that, we shall recall the definition of generalized Peisert graphs, first introduced by Mullin~\cite{NM}. This definition is motivated by the similarity between generalized Paley graphs and Peisert graphs (first introduced by Peisert in \cite{WP2} in order to classify self-complementary symmetric graphs).

\begin{defn}[{\cite[Definition 2.11]{AY}}]\rm
Let $d$ be a positive even integer, and $q$ a prime power such that $q \equiv 1 \pmod {2d}$. The {\em $d$-th power Peisert graph of order $q$}, denoted $GP^*(q,d)$, is the Cayley graph $\operatorname{Cay}(\F_q^+, M_{q,d})$, where
$$
M_{q,d}=\bigg\{g^{dk+j}: 0\leq j \leq \frac{d}{2}-1, k \in \Z\bigg\},
$$
and $g$ is a primitive root of $\F_q$.
\end{defn}

While the definition of $GP^*(q,d)$ depends on the choice of the primitive root $g$, it is clear that the isomorphism class of $GP^*(q,d)$ is independent of the choice of $g$. We refer to \cite[Remark 2.12]{AY} for a discussion on the connection between generalized Peisert graphs and generalized Paley graphs. In particular, if $d$ is even, then $GP^*(q,d)$ contains $GP(q,d)$ as a subgraph and thus the structure of maximal cliques in $GP^*(q,d)$ is potentially richer. However, our second main result shows that a maximal subfield clique in $GP^*(q,d)$ is still a maximal clique. 

\begin{thm}\label{main2}
Let $d \geq 4$ be an even integer. Let $q$ be a prime power such that $q^n \equiv 1 \pmod {2d}$ and $q>(n-1)^2d^4/\pi^2(d-1)^2$. If $\F_q$ is a maximal subfield clique in $GP^*(q^n,d)$, then $\F_q$ is also a maximal clique. 
\end{thm}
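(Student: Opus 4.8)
The plan is to run the same character-sum scheme that underlies Theorem~\ref{main1}, adapting the spectral expansion of the connection set from the Paley arc (a single coset of $d$-th powers) to the Peisert arc (half of the cosets). Since $\F_q$ is assumed to be a subfield clique, it suffices to prove non-extendability: that no $v\in\F_{q^n}\setminus\F_q$ is adjacent to every element of $\F_q$. Fix a primitive root $g$ of $\F_{q^n}$, let $\chi$ be the multiplicative character of order $d$ with $\chi(g)=\zeta:=e^{2\pi i/d}$, and note that $x\in M_{q^n,d}$ exactly when $\chi(x)\in\{1,\zeta,\dots,\zeta^{d/2-1}\}$. Expanding the indicator of this arc into multiplicative characters gives, for $x\neq0$,
\begin{equation*}
\mathbbm{1}[x\in M_{q^n,d}]=\frac12+\sum_{\substack{1\le \ell\le d-1\\ \ell\ \mathrm{odd}}}c_\ell\,\chi^\ell(x),\qquad c_\ell=\frac{2}{d\,(1-\zeta^{-\ell})},
\end{equation*}
the even nonzero frequencies dropping out because $\zeta^{d/2}=-1$. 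First I would assume for contradiction that such a $v$ exists; summing this identity over $x=v-a$ with $a\in\F_q$ (note $v-a\neq0$ since $v\notin\F_q$) and using that $v$ is adjacent to all $q$ elements of $\F_q$ yields
\begin{equation*}
\frac{q}{2}=\sum_{\substack{1\le\ell\le d-1\\ \ell\ \mathrm{odd}}}c_\ell\sum_{a\in\F_q}\chi^\ell(v-a).
\end{equation*}

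The key input is the estimate $\bigl|\sum_{a\in\F_q}\chi^\ell(v-a)\bigr|\le (n-1)\sqrt q$, valid for every $v\notin\F_q$ and every nontrivial power $\chi^\ell$ (here $1\le\ell\le d-1<d=\operatorname{ord}\chi$, so each $\chi^\ell$ is nontrivial). This is precisely the Weil-type bound for a multiplicative character of $\F_{q^n}$ summed over a coset of the subfield $\F_q$ that drives Theorem~\ref{main1}: the hypothesis $v\notin\F_q$ guarantees the relevant affine line avoids the origin and is not annihilated by $\chi^\ell$, so square-root cancellation applies, and the factor $n-1$ reflects the degree of $v$ over $\F_q$. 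Feeding this bound in and taking absolute values gives
\begin{equation*}
\frac q2\le (n-1)\sqrt q\sum_{\substack{1\le\ell\le d-1\\ \ell\ \mathrm{odd}}}|c_\ell|.
\end{equation*}

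It then remains to control the coefficient sum. Since $|c_\ell|=\bigl(d\sin(\pi\ell/d)\bigr)^{-1}$ is maximized over odd $\ell$ at $\ell=1$ and there are exactly $d/2$ odd indices in $[1,d-1]$, I would bound crudely
\begin{equation*}
\sum_{\substack{1\le\ell\le d-1\\ \ell\ \mathrm{odd}}}|c_\ell|\le \frac{d}{2}\cdot\frac{1}{d\sin(\pi/d)}=\frac{1}{2\sin(\pi/d)},
\end{equation*}
whence $\sqrt q\le (n-1)/\sin(\pi/d)$. The proof is then completed by the elementary inequality $\sin(\pi/d)\ge \pi(d-1)/d^2$ for $d\ge4$, which follows from $\sin x\ge x-x^3/6\ge x-x^2/\pi$ for $0<x=\pi/d\le\pi/4$; squaring and rearranging yields $q\le(n-1)^2d^4/\pi^2(d-1)^2$, contradicting the hypothesis $q>(n-1)^2d^4/\pi^2(d-1)^2$.

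I expect the main obstacle to be the subfield character-sum estimate itself: establishing $|\sum_{a\in\F_q}\chi^\ell(v-a)|\le(n-1)\sqrt q$ uniformly requires Weil's bound for a multiplicative character restricted to a coset of a subfield, together with care in the degenerate cases where $v$ lies in a proper intermediate subfield (so that the degree of $v$ over $\F_q$, and hence the effective constant, is smaller than $n$). A secondary but delicate point is the coefficient bookkeeping: one must check that discarding the even frequencies is legitimate and, crucially, that bounding every odd-frequency coefficient by the largest one $|c_1|$ is still precise enough to reproduce exactly the stated threshold rather than a weaker constant; this is where the sharp trigonometric inequality $\sin(\pi/d)\ge\pi(d-1)/d^2$ does the work of matching the $d^4/\pi^2(d-1)^2$ factor.
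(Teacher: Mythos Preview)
Your route via Fourier expansion of the arc indicator is genuinely different from the paper's. The paper applies Proposition~\ref{mainprop} with a \emph{single} character $\chi$ of order $d$: since every $\chi(\theta-a)$ lies in the arc $\{1,\zeta,\dots,\zeta^{d/2-1}\}$, Lemma~\ref{GP*lem} gives the lower bound $\bigl|\sum_a\chi(\theta-a)\bigr|\ge\varepsilon q$ with $\varepsilon=\pi(d-1)/d^2$ directly, and Katz's theorem gives the matching upper bound $(n-1)\sqrt q$. Your expansion instead spreads the work over all $d/2$ odd powers $\chi^\ell$, and you recover exactly the same threshold by bounding each $|c_\ell|$ by $|c_1|=1/(d\sin(\pi/d))$ and then invoking the same inequality $\sin(\pi/d)\ge\pi(d-1)/d^2$.

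There is, however, a real gap in the intermediate-subfield step, and it is more serious than you indicate. You write that when $v\in\F_{q^m}$ with $1<m<n$ the only effect is a smaller constant $(m-1)$ in Katz's bound; but Theorem~\ref{Katz} also requires the character to be nontrivial on $\F_{q^m}$, and while $\chi^\ell$ is nontrivial on $\F_{q^n}$ for $1\le\ell\le d-1$, its restriction to $\F_{q^m}$ has some order $d'\mid d$ and vanishes precisely when $d'\mid\ell$. If $d'=1$ then $\F_{q^m}^*$ consists of $d$-th powers and $\F_{q^m}$ is a strictly larger subfield clique---this is exactly where the maximal-subfield-clique hypothesis enters, and you never invoke it. Even granting $d'>1$, if $d'$ is odd (e.g.\ $d=6$, $d'=3$) then $\ell=d'$ is an odd index with $\chi^{d'}$ trivial on $\F_{q^m}$, so $\sum_{a\in\F_q}\chi^{d'}(v-a)=q$, not $O(\sqrt q)$, and your displayed inequality fails. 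The paper's single-character approach sidesteps this entirely: it only needs $\chi$ itself to be nontrivial on $\F_{q^m}$, which the hypothesis guarantees. Your approach can be repaired---the terms with $d'\mid\ell$ contribute exactly $q/(2d')$ to the right-hand side, and after moving them left one can use $m\le n/2$ and $d'/(d'-1)\le 3/2$ to recover the stated bound---but this case analysis is missing from the proposal.
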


Note that Theorem~\ref{main2} refines Theorem~\ref{main1} provided that $q$ is sufficiently large. In fact, we will prove a more general (yet technical) statement for any Cayley graph containing a generalized Paley graph as a subgraph in Proposition~\ref{mainprop}. Before proving our main results, we shall introduce some preliminary tools in Section~\ref{prelim}.

\section{Preliminaries}\label{prelim}
A {\em multiplicative character} of $\F_q$ is a group homomorphism from $\F_q^*$ to the multiplicative group of complex numbers with modulus 1. For a multiplicative character $\chi$, its order $d$ is the smallest positive integer such that $\chi^d=\chi_0$, where $\chi_0$ is the trivial multiplicative character of $\F_q$. We refer to \cite[Chapter 5]{LN} for a general discussion on estimates on character sums. The following theorem, due to Katz \cite{K89}, is crucial in our proofs.

\begin{thm}[Katz]\label{Katz}
Let $\theta \in \F_{q^n}$ such that $\F_q(\theta)=\F_{q^n}$. Let $\chi$ be a non-trivial multiplicative character of $\mathbb{F}_{q^{n}}.$ Then
$$
\left|\sum_{a \in \mathbb{F}_{q}} \chi(\theta+a)\right| \leq(n-1) \sqrt{q}.
$$
\end{thm}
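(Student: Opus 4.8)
The plan is to establish this square-root cancellation estimate through Weil's Riemann Hypothesis for curves over finite fields (in its character-sum form, or equivalently Deligne's bounds via $\ell$-adic cohomology). The feature that separates Katz's bound from a routine application of Weil's theorem is that the sum runs over the \emph{subfield} $\F_q$ rather than over the full field $\F_{q^n}$; it is precisely here that the hypothesis $\F_q(\theta)=\F_{q^n}$ must enter, and I expect it to manifest as a nondegeneracy condition guaranteeing full cancellation. I would split the argument into an elementary Fourier-analytic reduction followed by a geometric estimate (the case $n=1$ being immediate, since then the bound is $0$ and the sum equals $\sum_{x\in\F_q}\chi(x)=0$).

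First I would convert the multiplicative character into additive ones via Gauss sums. Fix a nontrivial additive character $\psi=\psi_0\circ\Tr_{\F_{q^n}/\F_q}$ of $\F_{q^n}$, where $\psi_0$ is a nontrivial additive character of $\F_q$, and let $G(\bar\chi)=\sum_{t\in\F_{q^n}^*}\bar\chi(t)\psi(t)$ be the associated Gauss sum, so that $|G(\bar\chi)|=q^{n/2}$. Since $\theta\notin\F_q$ when $n\geq2$, we have $\theta+a\neq0$ for every $a\in\F_q$, so the standard expansion $\chi(y)=G(\bar\chi)^{-1}\sum_{t\in\F_{q^n}^*}\bar\chi(t)\psi(ty)$ applies to each term. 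Substituting and interchanging the order of summation gives
\[
\sum_{a\in\F_q}\chi(\theta+a)=\frac{1}{G(\bar\chi)}\sum_{t\in\F_{q^n}^*}\bar\chi(t)\psi(t\theta)\sum_{a\in\F_q}\psi(ta).
\]
For $a\in\F_q$ one has $\psi(ta)=\psi_0\big(a\,\Tr_{\F_{q^n}/\F_q}(t)\big)$, so by orthogonality the inner sum equals $q$ when $\Tr_{\F_{q^n}/\F_q}(t)=0$ and vanishes otherwise. Writing $H=\{t\in\F_{q^n}:\Tr_{\F_{q^n}/\F_q}(t)=0\}$ for the trace-zero hyperplane, this collapses to
\[
\sum_{a\in\F_q}\chi(\theta+a)=\frac{q}{G(\bar\chi)}\sum_{t\in H\setminus\{0\}}\bar\chi(t)\,\psi_0\big(\Tr_{\F_{q^n}/\F_q}(\theta t)\big).
\]
At this stage the role of the generation hypothesis becomes visible: by nondegeneracy of the trace form, the linear functional $t\mapsto\Tr_{\F_{q^n}/\F_q}(\theta t)$ is nontrivial on $H$ exactly when $\theta\notin\F_q$, and the full condition $\F_q(\theta)=\F_{q^n}$ is what forces the pair (multiplicative character, linear form) to be genuinely nondegenerate, preventing any main term. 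When $\theta\in\F_q$ this form is identically zero on $H$, the remaining sum becomes a pure multiplicative character sum over the hyperplane, and the estimate fails—showing the hypothesis is necessary.

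It remains to bound the mixed character sum $T=\sum_{t\in H\setminus\{0\}}\bar\chi(t)\,\psi_0(\Tr_{\F_{q^n}/\F_q}(\theta t))$ over the $(n-1)$-dimensional affine space $H\cong\mathbb{A}^{n-1}_{\F_q}$. This is the heart of the matter, where the Riemann Hypothesis is indispensable: the summand is the trace function of a rank-one sheaf on $H$ obtained by tensoring a Kummer sheaf (from $\bar\chi$) with an Artin--Schreier sheaf (from the linear form), and one aims to show via Deligne's bounds the square-root cancellation $|T|\leq(n-1)q^{(n-1)/2}$, the constant $n-1$ arising as a bound on the relevant Betti numbers. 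Granting this, the target bound is reproduced exactly:
\[
\Big|\sum_{a\in\F_q}\chi(\theta+a)\Big|=\frac{q}{|G(\bar\chi)|}\,|T|\leq\frac{q}{q^{n/2}}\,(n-1)q^{(n-1)/2}=(n-1)\sqrt{q}.
\]

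The main obstacle is this last estimate: obtaining the \emph{precise} constant $n-1$ rather than a cruder Weil-type bound. This requires controlling the dimensions of the cohomology groups of the associated rank-one sheaf, and in particular showing that its top-degree cohomology vanishes; this vanishing is exactly the translation of $\F_q(\theta)=\F_{q^n}$ into the geometric nontriviality of the sheaf. I expect that carrying this out carefully—equivalently, analysing the ramification of the corresponding $L$-function over $\F_q(t)$ to see that it is a polynomial of degree $n-1$, and then invoking the Riemann Hypothesis to place all of its inverse roots on the circle $|z|=\sqrt{q}$—is precisely the technical content of Katz's theorem, whereas the Gauss-sum reduction above is elementary and routine.
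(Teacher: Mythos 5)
A preliminary remark: the paper offers no proof of this statement at all --- it is quoted verbatim from Katz \cite{K89} as a black box --- so your attempt must be measured against Katz's original argument. Your elementary steps are correct: the expansion $\chi(y)=G(\bar\chi)^{-1}\sum_{t}\bar\chi(t)\psi(ty)$ (legitimate since $\theta+a\neq 0$ for $n\geq 2$), the collapse of the inner sum onto the trace-zero hyperplane $H$, and the observation that $t\mapsto \Tr(\theta t)$ vanishes identically on $H$ exactly when $\theta\in\F_q$. The genuine gap is the final estimate $|T|\leq (n-1)q^{(n-1)/2}$, which you assert via ``Deligne's bounds \ldots{} Betti numbers'' but never establish --- and here the structure of your reduction works against you. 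Since $|G(\bar\chi)|=q^{n/2}$ exactly, your identity gives $|T|=q^{n/2-1}\bigl|\sum_{a\in\F_q}\chi(\theta+a)\bigr|$: the Gauss-sum transform is norm-preserving, so the claimed bound on $T$ is not a step toward the theorem but is \emph{literally equivalent} to it. No difficulty has been removed; instead you now face an $(n-1)$-variable mixed Kummer/Artin--Schreier sum, where square-root cancellation with the precise constant $n-1$ requires computing compactly supported cohomology of a sheaf on an $(n-1)$-dimensional variety --- strictly harder bookkeeping than the original one-variable problem. Moreover, while you correctly hedge that the full hypothesis $\F_q(\theta)=\F_{q^n}$ must enter (indeed, if $\theta$ generates a proper intermediate field $\F_{q^m}$ and $\chi$ restricts trivially there, the original sum equals $q$, so $\theta\notin\F_q$ alone does not suffice), you give no mechanism translating that hypothesis into the vanishing of the top-degree cohomology on the dual side.

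The standard route --- and in essence Katz's --- dispenses with the Gauss-sum dualization and works directly with the one-parameter sum: $a\mapsto\chi(\theta+a)$ is the trace function of a rank-one sheaf on $\mathbb{A}^1$, lisse away from the $n$ Galois conjugates $-\theta^{(i)}$ and tamely ramified there; the hypothesis $\F_q(\theta)=\F_{q^n}$ enters precisely to guarantee these $n$ points are distinct. Grothendieck--Ogg--Shafarevich then gives $h^1_c=n-1$, ramification forces $h^2_c=0$, and Weil II yields the bound. Equivalently, in the $L$-function language of your closing paragraph: the sum is a coefficient of an $L$-function over $\F_q(t)$ attached to a character tamely ramified at the degree-$n$ place cut out by the minimal polynomial of $\theta$ and at infinity, hence a polynomial of degree $n-1$ whose inverse roots lie on $|z|=\sqrt{q}$ by the Riemann Hypothesis for curves. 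You gesture at exactly this machinery, but attach it to the dual sum $T$, where it does not directly apply. As written, the proposal correctly \emph{reformulates} the theorem but does not prove it.
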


The following definition is helpful for our discussions.

\begin{defn}[{\cite[Definition 2.16]{AY}}]\label{defn:epsilon-bounded}\rm
Let $\varepsilon>0$. A set $M \subset \C$ is said to be $\varepsilon$-lower bounded if for every integer $k \in \N$, and for every choice of $x_1,x_2,\ldots, x_k \in M$, we have
\begin{align*}
\bigg|\sum_{j=1}^{k} x_j\bigg| \geq \varepsilon k.
\end{align*}
\end{defn}

Using trigonometric manipulations, it is not difficult to show the following lemma. 

\begin{lem}[{\cite[Lemma 4.5]{AY}}]\label{GP*lem} 
Let $d \geq 4$ be an even integer, and $\omega=\exp(2\pi i/d)$. Then the set $M=\{\omega^j :0 \leq j \leq d/2-1\}$ is $\left(\frac{\pi}{d}-\frac{\pi}{d^2}\right)$-lower bounded.
\end{lem}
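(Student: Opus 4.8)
The plan is to reduce the defining inequality to a lower bound on nonnegative integer combinations of the points of $M$, and then to bound such a combination below by projecting onto a carefully chosen direction. Concretely, any sum $\sum_{j=1}^{k} x_j$ with $x_j \in M$ can be rewritten as $\sum_{l=0}^{d/2-1} n_l \omega^l$, where the $n_l$ are nonnegative integers with $\sum_l n_l = k$. So it suffices to show
$$
\left|\sum_{l=0}^{d/2-1} n_l \omega^l\right| \geq \left(\frac{\pi}{d}-\frac{\pi}{d^2}\right)\sum_{l=0}^{d/2-1} n_l
$$
for all nonnegative integers $n_l$.

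The key observation is that every point $\omega^l = \exp(2\pi i l/d)$ with $0 \le l \le d/2-1$ has argument in $[0,\pi - 2\pi/d]$, so all of them point into (roughly) a common half-plane. I would exploit this with the elementary bound $|z| \geq \operatorname{Re}(e^{-i\phi}z)$, valid for every real $\phi$. Choosing $\phi$ to be the bisecting direction $\phi = \frac{\pi}{2}-\frac{\pi}{d}$ of the angular range yields
$$
\left|\sum_{l=0}^{d/2-1} n_l \omega^l\right| \geq \operatorname{Re}\!\left(e^{-i\phi}\sum_{l=0}^{d/2-1} n_l \omega^l\right) = \sum_{l=0}^{d/2-1} n_l \cos\!\left(\frac{2\pi l}{d}-\phi\right).
$$
For each $l$ in range the argument $\frac{2\pi l}{d}-\phi$ lies in the symmetric interval $[-(\frac{\pi}{2}-\frac{\pi}{d}),\,\frac{\pi}{2}-\frac{\pi}{d}]$, on which cosine attains its minimum $\cos(\frac{\pi}{2}-\frac{\pi}{d})=\sin(\pi/d)$ at the endpoints. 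Hence every cosine term is at least $\sin(\pi/d)$, and the right-hand side is at least $\sin(\pi/d)\sum_l n_l$.

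It then remains to verify the purely analytic inequality $\sin(\pi/d) \geq \frac{\pi}{d}-\frac{\pi}{d^2}$. Writing $x=\pi/d\in(0,\pi/4]$ for $d\geq 4$, this is equivalent to $\sin x \geq x - x^2/\pi$, which follows from the standard estimate $\sin x \geq x - x^3/6$ together with $x^3/6 \leq x^2/\pi$; the latter holds since $x \leq \pi/4 < 6/\pi$. I expect the only genuinely delicate points to be the correct choice of the projection direction $\phi$ and this closing elementary estimate, while the remainder is bookkeeping. In particular, the main subtlety is identifying the optimal $\phi$: projecting onto any individual $\omega^l$ would produce a strictly worse constant, whereas the angular bisector is precisely what delivers the tight value $\sin(\pi/d)$.
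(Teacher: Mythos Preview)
Your proof is correct. The paper does not actually prove this lemma; it merely quotes it from \cite[Lemma 4.5]{AY} with the remark that it follows from ``trigonometric manipulations,'' and your projection-onto-the-bisecting-direction argument is precisely such a manipulation, yielding the sharp constant $\sin(\pi/d)\ge \pi/d-\pi/d^2$.
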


\section{Proof of main results}

We will prove a more general statement in the following proposition, and then deduce Theorem~\ref{main1} and Theorem~\ref{main2} as special cases.

\begin{prop}\label{mainprop}
Let $n \geq 2$ be an integer and $\varepsilon>0$ a real number. Let $X=\operatorname{Cay}(\F_{q^n}^+, S)$ be a Cayley graph with $q>(n-1)^2/\epsilon^2$. Assume that there is an integer $d>1$, such that $X$ contains $GP(q^n,d)$ as a subgraph and the set $M=\{\chi(x): x \in S\}$ is $\varepsilon$-lower bounded for some multiplicative character $\chi$ of $\F_{q^n}$ with order $d$. If $\F_q$ is a maximal subfield clique in $X$, then $\F_q$ is also a maximal clique in $X$.
\end{prop}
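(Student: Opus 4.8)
The plan is to argue by contradiction. Suppose $\F_q$ is not a maximal clique in $X$; since $\F_q$ is itself a clique, there must then exist an element $\theta \in \F_{q^n} \setminus \F_q$ adjacent to every element of $\F_q$, that is, $\theta - a \in S$ for all $a \in \F_q$ (note $\theta - a \neq 0$ because $\theta \notin \F_q$). The key object is the character sum $\sum_{a \in \F_q} \chi(\theta - a)$. On one hand, each $\theta - a$ lies in $S$, so each value $\chi(\theta - a)$ lies in $M$; applying the $\varepsilon$-lower bounded hypothesis (Definition~\ref{defn:epsilon-bounded}) with $k = q$ and $x_a = \chi(\theta - a)$ yields the lower bound $|\sum_{a \in \F_q}\chi(\theta - a)| \geq \varepsilon q$. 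Re-indexing via $a \mapsto -a$ rewrites this as $\sum_{a \in \F_q}\chi(\theta + a)$, which is precisely the shape of sum controlled by Katz's theorem.

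First I would like to invoke Theorem~\ref{Katz} to bound this sum by $(n-1)\sqrt{q}$, which, combined with the lower bound, would give $\varepsilon q \le (n-1)\sqrt{q}$, i.e. $q \le (n-1)^2/\varepsilon^2$, contradicting $q > (n-1)^2/\varepsilon^2$. The catch is that Theorem~\ref{Katz} requires $\F_q(\theta) = \F_{q^n}$, whereas a priori $\theta$ only generates some intermediate field $\F_{q^m} = \F_q(\theta)$ with $m \mid n$ and $2 \le m \le n$ (here $m \ge 2$ since $\theta \notin \F_q$). I expect this generation issue to be the main obstacle, and I would resolve it by splitting into two cases according to the behavior of $\chi$ on $\F_{q^m}^*$.

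In the first case, when $\chi$ restricted to $\F_{q^m}^*$ is nontrivial, I would apply Theorem~\ref{Katz} with the field $\F_{q^m}$ in place of $\F_{q^n}$ (so that $\F_q(\theta) = \F_{q^m}$ holds by construction) and the nontrivial character $\chi|_{\F_{q^m}^*}$. Since each $\theta + a$ lies in $\F_{q^m}$, the sum is unchanged and is bounded by $(m-1)\sqrt{q} \le (n-1)\sqrt{q}$, producing the contradiction above. This case in particular covers $m = n$, where $\chi$ is automatically nontrivial on $\F_{q^n}^*$ because it has order $d > 1$.

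In the second case, when $\chi|_{\F_{q^m}^*}$ is trivial, Katz's estimate yields no cancellation and the numerical contradiction breaks down; here I would instead exploit the two remaining hypotheses. Triviality of $\chi$ on $\F_{q^m}^*$ means every element of $\F_{q^m}^*$ is a $d$-th power in $\F_{q^n}^*$ (as $\chi$ has order $d$), so $\F_{q^m}^* \subseteq (\F_{q^n}^*)^d \subseteq S$, the last inclusion holding because $X$ contains $GP(q^n,d)$ as a subgraph. Consequently any two distinct elements of $\F_{q^m}$ differ by an element of $S$, so $\F_{q^m}$ is itself a clique in $X$; being a subfield strictly containing $\F_q$, it contradicts the maximality of $\F_q$ as a subfield clique. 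Since both cases lead to contradictions, no such $\theta$ can exist, and therefore $\F_q$ is a maximal clique in $X$.
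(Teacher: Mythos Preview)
Your proof is correct and follows essentially the same approach as the paper: argue by contradiction, pass to the intermediate field $\F_{q^m}=\F_q(\theta)$, and split into two cases according to whether the restriction $\chi|_{\F_{q^m}^*}$ is trivial (forcing $\F_{q^m}$ to be a larger subfield clique) or nontrivial (allowing Katz's bound to contradict the $\varepsilon$-lower bound). The only cosmetic differences are that the paper names the restriction $\chi'$ explicitly and treats the trivial-restriction case first.
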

\begin{proof}
Assume that $\F_q$ is not a maximal clique; then we can find $\theta \in \F_{q^n} \setminus \F_q$ such that $\F_q \cup \{\theta\}$ forms a clique in $X$. Thus, by definition, for any $a \in \F_q$, we have $\theta-a \in S$ and $\chi(\theta-a) \in M$.

Let $\F_{q^m}$ be the smallest extension of $\F_q$ that contains $\theta$; then $\F_{q^m}$ is necessarily a subfield of $\F_{q^n}$ and $m>1$. Let $\chi'$ be the restriction of $\chi$ on the subfield $\F_{q^m}$; then $\chi'$ is a multiplicative character of $\F_{q^m}$. 

Suppose $\chi'$ is the trivial multiplicative character of $\F_{q^m}$, then $\chi(x)=1$ for each $x \in \F_{q^m}^*$. This means that each element in $\F_{q^m}^*$ is a $d$-th power in $\F_{q^n}^*$ and it follows that $\F_{q^m}^* \subset S$ since $X$ contains $GP(q^n,d)$ as a subgraph. In particular, $\F_{q^m}$ is a subfield clique in $X$ that is strictly larger than $\F_q$, violating the assumption. Thus, $\chi'$ is a non-trivial multiplicative character of $\F_{q^m}$. 

Applying Theorem~\ref{Katz} to the character $\chi'$ on the affine line $\theta+\F_q$ and using the definition that $M$ is $\epsilon$-lower bounded, we obtain that
$$
\epsilon q \leq \left|\sum_{a \in \mathbb{F}_{q}} \chi(\theta+a)\right| =\left|\sum_{a \in \mathbb{F}_{q}} \chi'(\theta+a)\right|\leq(m-1) \sqrt{q} \leq (n-1)\sqrt{q}.
$$
Therefore, $q \leq (n-1)^2/\epsilon^2$, contradicting our assumption. This shows that $\F_q$ is a maximal clique in $X$.
\end{proof}

\begin{rem}\rm
From the proof, it is easy to see that the condition ``$\{\chi(x): x \in S\}$ is $\varepsilon$-lower bounded" in the statement of the above proposition can be weakened to $|\sum_{x \in A} \chi(x)| \geq \epsilon q$
for any $A \subset S$ with $|A|=q$. In other words, if there is $S' \subset S$ such that $|S \setminus S'|$ is small and $\{\chi(x): x \in S'\}$ is $\varepsilon$-lower bounded, then we can still conclude that $\F_q$ is maximal clique provided that $q$ is sufficiently large.
\end{rem}

\begin{rem}\rm
A result of a similar flavor has appeared in \cite[Theorem 1.3]{AY} in terms of maximum cliques in the so-called Peisert-type graphs. It generalizes the celebrated Van Lint--MacWilliams' conjecture (equivalently, Erd\H{o}s-{K}o-{R}ado theorem for Paley graphs of square order), first proved by Blokhuis \cite{Blo84}. We refer to \cite[Section 2]{AY} for a historical discussion.
\end{rem}

Finally, we prove Theorem~\ref{main1} and Theorem~\ref{main2}, and discuss the sharpness of the assumption that $q$ is sufficiently large in both theorems.

\begin{proof}[Proof of Theorem~\ref{main1}]
Note that the connection set $S$ of $GP(q^n,d)$ consists of $d$-th powers in $\F_{q^n}^*$. It follows that $M=\{\chi(x): x \in S\}=\{1\}$ is $1$-lower bounded for any multiplicative character $\chi$ of $\F_{q^n}$ with order $d$. Thus, the theorem follows immediately from Proposition~\ref{mainprop}.
\end{proof}

\begin{rem}\rm
We conjecture that the condition $q>(n-1)^2$ in Theorem~\ref{main1} can be dropped, however we do not know how to remove this condition. When $n \leq 5$, we verified that Theorem~\ref{main1} holds for all $q \leq (n-1)^2$ by enumerating all possible generalized Paley graphs via SageMath. We also verified that Theorem~\ref{main1} holds for all $q \leq 17$ when $n=6$. 
\end{rem}

\begin{proof}[Proof of Theorem~\ref{main2}]
Let $g$ be the primitive root of $\F_{q^n}$ that defines the graph $GP^*(q^n,d)$. Let $\chi$ be a multiplicative character in $\F_q$ such that $\chi(g)=\omega$, where $\omega=\exp(2\pi i/d)$; then $\chi$ has order $d$. As discussed before, $GP^*(q^n,d)$ contains $GP(q^n,d)$ as a subgraph.  Let $M=\{\chi(x): x \in S\}$, where $S=\{g^{j+kd}: 0 \leq j \leq d/2-1, k \in \Z\}$ is the connection set of $GP^*(q^n,d)$. It follows from Lemma~\ref{GP*lem} that $M=\{\omega^j :0 \leq j \leq d/2-1\}$ is $\left(\frac{\pi}{d}-\frac{\pi}{d^2}\right)$-lower bounded. Therefore, by Proposition~\ref{mainprop}, $\F_q$ is a maximal clique provided that
$$
q>\frac{(n-1)^2}{\left(\frac{\pi}{d}-\frac{\pi}{d^2}\right)^2}=\frac{(n-1)^2d^4}{\pi^2(d-1)^2}.
$$
\end{proof}
\begin{rem}\rm
We believe that the condition $q>(n-1)^2d^4/\pi^2(d-1)^2$ in Theorem~\ref{main2} is not optimal. However, we do need to assume $q$ is sufficiently large for Theorem~\ref{main2} to hold. There are plenty of counterexamples when $q$ is small compared to $n$ and $d$. For example, when $q=3,n=4$, and $d=4$, the subfield $\F_3$ is a maximal subfield clique in $GP^*(81,4)$, and yet there is a maximal clique with size $9$ containing $\F_3$. Similarly, when $q=5,n=6$, and $d=62$, the subfield $\F_5$ is a maximal subfield clique in $GP^*(15625,62)$, and yet there is a maximal clique with size $25$ containing $\F_5$.
\end{rem}

\section*{Acknowledgement}
The author thanks Shamil Asgarli for many helpful discussions. The research of the author is supported by a doctoral fellowship from the University of British Columbia.

\end{document}